\newtheorem{theorem}{Theorem}[section]
\newtheorem{proposition}[theorem]{Proposition}
\newtheorem{corollary}[theorem]{Corollary}
\newtheorem{lemma}[theorem]{Lemma}
\newtheorem{question}[theorem]{Question}
\providecommand{\customgenericname}{}
\newcommand{\newcustomprop}[2]{%
  \newenvironment{#1}[1]
  {%
   \renewcommand\customgenericname{#2}%
   \renewcommand\theinnercustomgeneric{##1}%
   \innercustomgeneric
  }
  {\endinnercustomgeneric}
}
\theoremstyle{definition}
\newtheorem{definition}[theorem]{Definition}
\newtheorem{example}[theorem]{Example}
\newtheorem*{case2'}{Case 2$'$}
\newtheorem{theorem-named}{}
\newtheorem{theorem-labeled}{Theorem}
\newtheorem{definition-named}{}
\newtheorem{conjecture-named}{}
\newtheorem{case-named}{}
\numberwithin{equation}{section}
\newcommand{\tors}{{\rm Tors}}
\newcommand{\Z}{\mathbb{Z}}
\newcommand{\F}{\mathbb{F}}
\newcommand{\s}{\mathfrak{s}}
\def\d{\partial}
\def\Z{\mathbb{Z}}
\def\Im{\operatorname{Im}}
\def\s{\mathcal{F}_{sh}}
\def\f{\mathcal{F}}
\def\h{\mathcal{F}_{h}}
\def\CFKi{\mathit{CFK}^\infty}
\def\Ord{\operatorname{Ord}}
\def\HFKh{\widehat{\mathit{HFK}}}
\def\HFKm{\mathit{HFK}^-}
\def\ic{\widehat{\mathit{HF}}}
\def\CFKF{\mathit{CFK}_{\F[U,V]}}
\begin{document}
\title{Ribbon knots, cabling, and handle decompositions}

\author{Jennifer Hom}
\thanks{The first author was partially supported by NSF grant DMS-1552285.}
\address{School of Mathematics, Georgia Institute of Technology, Atlanta, GA, USA}
\email{hom@math.gatech.edu}

\author{Sungkyung Kang}
\address{Institute of Mathematical Sciences, The Chinese University of Hong Kong, Shatin, N.T. Hong Kong}
\email{skkang@math.cuhk.edu.hk}

\author{JungHwan Park}
\address{School of Mathematics, Georgia Institute of Technology, Atlanta, GA, USA}
\email{junghwan.park@math.gatech.edu }

\def\subjclassname{\textup{2020} Mathematics Subject Classification}
\expandafter\let\csname subjclassname@1991\endcsname=\subjclassname
\expandafter\let\csname subjclassname@2000\endcsname=\subjclassname
\subjclass{57K10,  57K40,  57K18, and 57N70.}
%\keywords{}

\begin{abstract}
The fusion number of a ribbon knot is the minimal number of 1-handles needed to construct a ribbon disk. The strong homotopy fusion number of a ribbon knot is the minimal number of 2-handles in a handle decomposition of a ribbon disk complement. We demonstrate that these invariants behave completely differently under cabling by showing that the $(p,1)$-cable of \emph{any} ribbon knot with fusion number one has strong homotopy fusion number one and fusion number~$p$. Our main tools are Juh{\'a}sz-Miller-Zemke's bound on fusion number coming from the torsion order of knot Floer homology and Hanselman-Watson's cabling formula for immersed curves.
\end{abstract}

\maketitle

\section{Introduction}\label{sec:intro}
A knot is \emph{slice} if it bounds a smoothly embedded disk in $B^4$. Moreover, if a slice knot $K$ bounds a smoothly embedded disk $D$ in $B^4$ for which there are no local maxima of the height function restricted to the disk, then we say that $K$ is \emph{ribbon} and $D$ is a \emph{ribbon disk}. It is an outstanding open
problem due to Fox \cite{Fox:1961-1} whether the two notions coincide. 

One can also define notions that lie in between the two. A knot is said to be \emph{strongly homotopy ribbon} if it bounds  a smoothly embedded disk in $B^4$ such that the disk complement has a handlebody decomposition consisting of only 0, 1 and 2-handles (see e.g.\ \cite{Cochran:1983-1, Larson-Meier:2015-1,Miller-Zemke:2019-1}). Also, a knot is said to be \emph{homotopy ribbon} if it bounds a smoothly embedded disk in $B^4$ such that the fundamental group of the knot complement surjects to the fundamental group of the disk complement \cite{Casson-Gordon:1983-1}.\footnote{Note that the original definition of \cite{Casson-Gordon:1983-1} assumed that the 4-manifold was a homotopy 4-ball, but for our purposes we will assume that the 4-manifold is $B^4$.} Hence we have the following chain of implications:
$$
K \text{ is ribbon} \Rightarrow K \text{ is strongly homotopy ribbon} \Rightarrow K \text{ is homotopy ribbon}\Rightarrow K \text{ is slice}.$$ Whether the converse of any of these implications holds is an interesting open problem. 

In this article, we study the complexity of ribbon disks using concepts analogous to those above. Thus, we introduce the following terminologies. First, recall that for any ribbon knot $K$, the \emph{fusion number} $\f(K)$ is defined to be the minimal number of 1-handles needed to construct a ribbon disk in $B^4$ (see e.g.\ \cite{Miyazaki:1986-1}). We define the \emph{strong homotopy fusion number} $\s(K)$ to be the minimal number of 2-handles in a handle decomposition of a ribbon disk complement and the \emph{homotopy fusion number} $\h(K)$ to be the minimal number of relations for a presentation of the fundamental group of a ribbon disk complement. 

Given a ribbon disk $D$ in $B^4$, let $B^4 \smallsetminus D$ denote the complement of an open tubular neighborhood of $D$ in $B^4$. Recall that if a ribbon disk $D$ in $B^4$ has $n$ 1-handles, then $B^4 \smallsetminus D$ has a handle decomposition with $n$ 2-handles. Further, if $B^4 \smallsetminus D$ has a handle decomposition with $n$ 2-handles, then $\pi_1(B^4 \smallsetminus D)$ has a presentation with $n$ relations. Hence for each ribbon knot $K$, we have the following inequalities 
$$\h(K) \leq  \s(K) \leq  \f(K).$$

Seemingly, the fusion number and the strong homotopy fusion number are intimately related. Nevertheless, in this article, we show that these invariants behave completely differently under one of the most basic operations on knots, \emph{cabling}. Let $K_{p,q}$ denote the $(p, q)$-cable of $K$, where $p$ denotes the longitudinal winding, and let $K_{p_1, q_1; p_2, q_2; \dots ; p_n, q_n}$ denote the iterated cable of $K$. We assume throughout that $p>1$ and $p_i>1$. 

%
%In this aticle, we show that the gap between $\s(K)$ and $\f(K)$ can be arbitrarily large. 

\begin{theorem}\label{thm:main}If $K$ is ribbon with $\f(K)=1$, then $\s(K_{p,1})=1$ and $\f(K_{p,1})=p$. Furthermore, $\s(K_{p_1,1; \dots ; p_n, 1})=1$ and $\f(K_{p_1,1; \dots ; p_n, 1})=p_1 p_2 \dots p_n$.
\end{theorem}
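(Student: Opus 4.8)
The plan is to prove, for each of $\s$ and $\f$, a matching upper and lower bound for the single cable $K_{p,1}$, and then to bootstrap to the iterated cables by induction. Since $\h \le \s \le \f$ and every cable appearing here is a nontrivial knot, both $\s$ and $\f$ of such a cable are at least $1$: if $\s(J)=0$ then $\h(J)=0$, so $\pi_1(B^4\setminus\nu D)$ is free, and as $H_1=\Z$ this forces the ribbon disk group to be $\Z$, hence $J$ trivial. Thus the real content lies in four estimates: (A) $\f(K_{p,1})\le p\,\f(K)$, (B) $\f(K_{p,1})\ge p$, (C) $\s(K_{p,1})\le \s(K)$, together with the torsion-order computation feeding (B). Granting these and $\f(K)=1$: (A) gives $\f(K_{p,1})\le p$, which with (B) yields $\f(K_{p,1})=p$, while (C) gives $\s(K_{p,1})\le \s(K)\le \f(K)=1$, hence $\s(K_{p,1})=1$.

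The heart of the argument, and the step I expect to be hardest, is the lower bound (B). Here I would invoke Juh\'asz--Miller--Zemke's inequality $\f(J)\ge \Ord(J)$, where $\Ord(J)$ is the torsion order of knot Floer homology, and reduce (B) to the computation $\Ord(K_{p,1})=p$. To carry this out I would feed the immersed-curve invariant $\gamma(K)$ into Hanselman--Watson's cabling formula: the $(p,1)$-cabling lifts $\gamma(K)$ to the relevant cover and reconnects it, multiplying the excursions of the curve by $p$, and the torsion order is read off from the maximal such excursion. Since $K$ is a nontrivial ribbon knot with $\f(K)=1$, the chain $1=\f(K)\ge \Ord(K)\ge 1$ (the torsion order detects the unknot) pins down $\Ord(K)=1$, and the cabling formula then gives $\Ord(K_{p,1})=p\,\Ord(K)=p$. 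The delicate point is extracting the torsion order cleanly from the cabled immersed curve and verifying the multiplicativity $\Ord(J_{p,1})=p\,\Ord(J)$ in the generality needed for the induction; this is exactly what makes the $p$-versus-$1$ dichotomy in the theorem possible.

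For the upper bounds (A) and (C) I would argue geometrically with one explicit cabled disk. Starting from a ribbon disk $D$ for $K$, I would place a ribbon disk $D_{p,1}$ for $K_{p,1}$ inside the tubular neighborhood $\nu D\cong D\times D^2$: the pattern $T_{p,1}$ sits on the Heegaard torus $\partial(D\times D^2)=S^3$ as the unknot and there bounds a disk $\Delta=D_{p,1}$. Cabling the band presentation of $D$ and resolving the cable of its bands --- where the single meridional twist of the $(p,1)$-pattern is essential in keeping the count down --- produces $D_{p,1}$ with exactly $p\,\f(K)$ bands, giving (A). For (C) the \emph{same} disk is analyzed from the complement side: writing $B^4\setminus\nu D_{p,1}=(B^4\setminus\nu D)\cup_{\partial\nu D}(\nu D\setminus\nu D_{p,1})$, the companion piece $B^4\setminus\nu D$ carries a handle decomposition with $\s(K)$ two-handles, while the pattern piece $\nu D\setminus\nu D_{p,1}$ admits a relative handle decomposition with \emph{no} two-handles. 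Hence the cable disk, though it requires $p$ bands as a disk, has a complement with only $\s(K)$ two-handles; this is precisely the mechanism separating $\s$ from $\f$.

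Finally, I would obtain the iterated statements by induction on $n$ using (A) and (C) with an arbitrary companion disk. Multiplicativity in (A) gives $\f(K_{p_1,1;\dots;p_n,1})\le p_1\cdots p_n\,\f(K)=p_1\cdots p_n$; iterating the torsion-order computation of the second paragraph gives $\Ord(K_{p_1,1;\dots;p_n,1})=p_1\cdots p_n$ and hence, via Juh\'asz--Miller--Zemke, the matching lower bound. Likewise (C) yields $\s(K_{p_1,1;\dots;p_n,1})\le \s(K)\le 1$, and nontriviality gives the reverse inequality. The one subtlety to check is that (A) and (C) only require a ribbon disk for the companion, not that its fusion number equal $1$; phrasing both bounds relative to an arbitrary companion disk makes the induction go through even though $\f$ jumps to $p_1$ after the first cabling while $\s$ stays $1$.
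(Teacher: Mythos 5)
Your skeleton matches the paper's (Kirby-calculus upper bounds plus a Juh\'asz--Miller--Zemke/immersed-curves lower bound), but the heart of your argument rests on a claim the paper deliberately avoids and does not prove: the multiplicativity $\Ord_U(J_{p,1})=p\,\Ord_U(J)$, which you yourself flag as the delicate point but never establish. No such cabling formula for torsion order is available, and the Hanselman--Watson operation does not let you ``read off the maximal excursion and multiply by $p$'' for an arbitrary multicurve: a straight-line or staircase component behaves quite differently from a figure-eight under cabling (the trefoil has $\Ord_U=1$ with no unit box, and nothing in your sketch would control its cables). The paper's actual route has two steps you omit entirely. First, a structural result (Proposition~\ref{prop:ord1box}): for a \emph{slice} knot with $\Ord_U(K)=1$, the complex $\CFKF(K)$ splits as $\F[U,V]$ plus unit boxes --- this is where sliceness enters, via the splitting $\CFKF(K)\cong\F[U,V]\oplus A$ and a nontrivial change-of-basis argument. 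Second (Proposition~\ref{prop:introcables}), a unit box contributes a figure-eight component to the immersed multicurve, and one tracks \emph{that specific component} through the iterated Hanselman--Watson cabling to get the inequality $\Ord_U(K_{p_1,1;\dots;p_n,1})\ge p_1\cdots p_n$ directly --- no exact multiplicativity, and no appeal to $\f=1$ at later stages of the iteration, is ever needed. The matching upper bound comes from $\f(K_{p_1,1;\dots;p_n,1})\le p_1\cdots p_n\,\f(K)$ (Proposition~\ref{prop:handle}), not from computing $\Ord_U$ exactly.

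There is a second genuine error: your nontriviality step. From $\s(J)=0$ you deduce $\h(J)=0$, hence ribbon disk group free, hence $\Z$, ``hence $J$ trivial'' --- but a ribbon disk group equal to $\Z$ does \emph{not} force triviality: the paper's own example, the Kinoshita--Terasaka knot, bounds a ribbon disk $D$ with $\pi_1(B^4\smallsetminus D)\cong\Z$, so $\h(\mathcal{K})=0$ for a nontrivial knot. The correct argument uses the handle structure, not just $\pi_1$: if $\s(J)=0$ the disk complement is built from $0$- and $1$-handles only, so it is $\natural_k\, S^1\times B^3$; since $H_1\cong\Z$ we get $k=1$, so the boundary, which is $0$-surgery on $J$, is $S^1\times S^2$, and Gabai's theorem \cite{Gabai:1987-1} (Property~R) --- which the paper cites at exactly this point --- gives that $J$ is unknotted. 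Finally, a smaller caveat: your bound (C), $\s(K_{p,1})\le\s(K)$, is \emph{stronger} than what the paper proves ($\s(K_{p,1})\le\f(K)$, via cabling an unknotted curve in a Kirby diagram arising from a band presentation), and your justification --- that the pattern piece $\nu D\smallsetminus\nu D_{p,1}$ admits a relative handle decomposition with no $2$-handles --- is asserted, not proved; a minimal-$2$-handle decomposition of $B^4\smallsetminus\nu D$ need not come from a band presentation, so the paper's construction does not directly apply to it. For the theorem the weaker bound suffices, since for iterated cables one can cable the original diagram curve repeatedly (it stays unknotted) and still bound $\s$ by $\f(K)=1$.
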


Note that there are many ribbon knots with fusion number one. In fact, it is known that any ribbon knot with fewer than 11 crossings has fusion number one~\cite[Appendix F]{Kawauchi:1996-1}. Further, if $J$ is a 2-bridge knot and $\overline J$ is the reverse of the mirror image of $J$, then $J \#\overline J$ has fusion number one. Also, the Kinoshita-Terasaka knot has fusion number one.

Theorem~\ref{thm:main} gives, in particular, examples of ribbon knots with strong homotopy fusion number one and arbitrarily large fusion number. We remark that it is also possible to produce such examples by combining previously known results. It was shown in \cite[Section 1.7]{JMZ} that $Q_{p,q}:=T_{p,q} \# \overline T_{p,q}$, where $\overline T_{p,q}$ is the reverse of the mirror image of $T_{p,q}$, has fusion number $\min\{p,q\} -1$. Moreover, if follows from Meier and Zupan \cite[Proposition 5.3]{Meier-Zupan:2019-1} that $\s(Q_{p,q})=1$.

The main tool we use to prove Theorem~\ref{thm:main} is called the torsion order (see Definition~\ref{def:order}) coming from knot Floer homology \cite{ozsvath2004holomorphicknot, Rasmussen:2003-1}. We denote the torsion order of $K$ by $\Ord_U(K)$. This invariant was defined by \cite{JMZ} and has many topological applications. For instance, it gives lower bounds on the bridge number and the band unlinking number of a knot. Also, it gives a lower bound on the fusion number of a ribbon knot and we use this property to establish Theorem~\ref{thm:main}. The computation of this invariant is based on bordered Floer homology \cite{LOT}, interpreted in terms of immersed curves as in \cite{HRW, HRW:2018}. In fact, we provide a lower bound on the torsion order of cable knots, which may be of independent interest. Let $\CFKF(K)$ denote the knot Floer complex over $\F[U,V]$.
We say that a knot $K$ has a \emph{unit box} if $\CFKF(K)$ contains a direct summand generated by four elements $\{a, b, c, d\}$, where the differential acts by $$\d a= Ub + V c, \d b = Vd, \text{ and } \d c = U d.$$

\begin{proposition}\label{prop:introcables}
If $K$ has a unit box, then
\[ \Ord_U(K_{p_1, q_1; p_2, q_2; \dots ; p_n, q_n}) \geq p_1  p_2 \dots  p_n. \]
\end{proposition}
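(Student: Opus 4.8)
The plan is to read the torsion order off the immersed-curve invariant and to track the box component through the cabling formula. Recall from Hanselman--Rasmussen--Watson \cite{HRW, HRW:2018} that $\CFKF(K)$ is encoded by an immersed multicurve $\gamma(K)$ (with local system) in the punctured torus $T_\bullet = T^2 \smallsetminus z$, refining the $\F[U,V]$-module structure on knot Floer homology. Under this dictionary the $U$-power torsion of $\HFKm(K)$ is carried by the components of $\gamma(K)$ that are inessential in $T^2$ but wrap around the puncture $z$, while the essential component carries the free part. A unit box, being a direct summand of $\CFKF(K)$, corresponds to a disjoint component of $\gamma(K)$: a small loop encircling $z$ once, contributing a single $U$-torsion generator of order $1$ (consistent with the figure-eight knot, whose complex is a box plus a free generator and whose torsion order is $1$). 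First, then, I would make this correspondence precise and record the lower-bound principle I will use: if a tight representative of some component of $\gamma(K)$ meets a fixed arc through $z$ transversally in $w$ essential points, then $\Ord_U(K) \geq w$.

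Second, I would invoke Hanselman--Watson's cabling formula, which expresses $\gamma(K_{p,q})$ as the image of $\gamma(K)$ under an explicit geometric operation: passing to the $p$-fold cover in the longitudinal direction, reconnecting the lifts with a shift governed by $q$, and performing the local modification near $z$ dictated by the cable pattern. The essential point for us is the effect on the distinguished box component. Because $\gcd(p,q)=1$, the $q$-shift reconnects the $p$ lifts of the small loop into a \emph{single} component, and because the cabling winds $p$ times longitudinally this component wraps $p$ times around the new puncture; hence it meets the detecting arc in $p$ essential points. By the principle of the first step this forces $\Ord_U(K_{p,q}) \geq p$, and since the shift only reparametrizes along the longitude without reducing the winding, the bound is insensitive to $q$.

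Third, for the iterated cable I would argue by induction on $n$, the key being that the detecting count is multiplicative: cabling a curve whose tracked component wraps $w$ times by $(p_i, q_i)$ yields a curve whose corresponding component wraps $p_i w$ times. Beginning with $w = 1$ for the box in $\gamma(K)$ and cabling successively by $(p_1,q_1), \dots, (p_n,q_n)$, the winding becomes $p_1 p_2 \cdots p_n$, giving $\Ord_U(K_{p_1, q_1; \dots; p_n, q_n}) \geq p_1 p_2 \cdots p_n$.

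The main obstacle is the passage to a tight representative, and with it the distinction between a \emph{single} component wrapping $p_1\cdots p_n$ times (torsion order $p_1\cdots p_n$) and a disjoint union of shorter loops (torsion order only $1$). The cabling operation can introduce bigons whose cancellation, together with any local system produced by the covering step, could a priori sever the wrapping strand or lower its winding. The heart of the argument is therefore to verify that the $p_1\cdots p_n$ strands of the tracked component through the detecting arc are geometrically essential and mutually non-cancelling, so that they survive the homotopy to tight position as one long torsion summand. I expect to establish this by exploiting that the box is a direct summand of $\CFKF(K)$, so its curve is disjoint from the remaining components, and then analyzing the cabling picture for this summand in isolation; confirming that the induced local system does not reduce the $U$-torsion order is the delicate technical point.
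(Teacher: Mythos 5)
Your high-level strategy coincides with the paper's (read $\Ord_U$ off the immersed-curve invariant, then push the box component through the Hanselman--Watson cabling formula), but the geometric core of your argument is wrong in two places. First, a unit box summand corresponds not to an embedded loop encircling the puncture once, but to an immersed \emph{figure-eight} curve around a single peg \cite[Proposition 47]{HRW:2018}. Second, and fatally for your plan, the cabling operation of \cite[Theorem 1]{HW-cables} does \emph{not} reconnect the $p$ lifts of a compact component into a single curve: the reconnection step of the algorithm (connecting loose ends) applies only to the non-compact lift of the essential component and is vacuous here --- the figure-eight has no loose ends, so that step is simply skipped. The cable's invariant therefore contains $p$ \emph{disjoint} closed copies of the figure-eight, each scaled vertically by a factor of $p$ and staggered by $q$. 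Your ``single component wrapping $p$ times around the new puncture'' never appears, and with it goes your inductive multiplicativity claim. The actual source of the bound is different: after the pegs are translated into one vertical line, a single scaled copy acquires a segment lying entirely to the right of $p$ pegs (Figure~\ref{fig:figureprop}), and by the dictionary of \cite[Theorem 51]{HRW:2018} such a segment yields generators $x,y$ with $\d x = U^p y$, hence an $\F[U]/U^p$ summand of $\HFKm$ (Lemma~\ref{lem:ordercable}). Iterated cables work because this scaled copy is again compact, gets rescaled by $p_2$ at the next stage, and then passes $p_1p_2$ pegs.

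Your ``lower-bound principle'' --- that $w$ essential intersections of a tight component with an arc through $z$ force $\Ord_U(K)\geq w$ --- is also false as stated, so even granting your (incorrect) reconnection picture the proof would not close. The unit box itself is a counterexample: its figure-eight crosses the vertical line through the peg in four essential points (these intersections count the four generators $a,b,c,d$, not torsion), yet its torsion order is $1$; likewise the Kinoshita--Terasaka knot's invariant contains eight disjoint figure-eights around the puncture (Example~\ref{ex:KT}) with $\Ord_U=1$. Torsion order is read off per component from how many pegs a single pulled-tight strand passes on one side, which is exactly what Lemma~\ref{lem:ordercable} encodes. Finally, the technical worry you flag at the end (bigon cancellation and local systems severing the long strand) is moot under the correct formula: the Hanselman--Watson operation introduces no local systems, and the scaled, staggered figure-eights are already in tight position, so no cancellation occurs.
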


It is well known that the Kinoshita-Terasaka knot $\mathcal{K}$ bounds a ribbon disk $D$ where $\pi_1(B^4\smallsetminus D) \cong \mathbb{Z}$. In particular, we have $\h(\mathcal{K})=0$ and $\s(\mathcal{K})=1$. Hence we ask the following natural question.

\begin{question}\label{q:shvsh} Are there knots for which the difference $\s-\h$ is arbitrarily large?
\end{question}

In Sections~\ref{sec:shfusion}, we show that $\mathcal{F}(K_{p,1})\le p\mathcal{F}(K)$ for any ribbon knot $K$ (see Proposition~\ref{prop:handle}). Theorem~\ref{thm:main} shows that the inequality is sharp for any ribbon knot with fusion number one, as well as for any iterated $(p_i, 1)$-cable of such a knot. One can ask if the inequality is sharp in general.

\begin{question}\label{q:shvsh} If $K$ is ribbon, then $\mathcal{F}(K_{p,1})=p\mathcal{F}(K)$?
\end{question}

Lastly, we make a remark on previously known results. There were several lower bounds on the fusion number prior to the torsion order. Most of them come from cyclic branched covers of knots \cite{Nakanishi-Nakagawa:1982-1, Kanenobu:2010-1, Aceto-Golla-Lecuona:2018-1}. For instance, given any knot $K$, the minimum number of generators for $H_1(\Sigma(K);\Z)$ is a lower bound on $2\f(K)$,  where $\Sigma(K)$ is the 2-fold cyclic branched cover of $K$ \cite[Proposition 2]{Nakanishi-Nakagawa:1982-1}. This, in particular, implies that if $K$ is a ribbon knot and $\Sigma(K)$ is not an integral homology sphere, then the connected sum of $n$ copies of $K$ has fusion number at least $n$. Moreover, it is straightforward to verify that these lower bounds coming from cyclic branched covers are, in fact, also lower bounds on $\s$. Note that the Kinoshita-Terasaka knot has trivial Alexander polynomial, and thus any cyclic branched cover of the $(p,1)$-cable of the Kinoshita-Terasaka knot is an integral homology sphere. In this case, none of the previous bounds apply.

\subsection*{Organization}
We work in the smooth category in this article. In Section~\ref{sec:shfusion}, we establish an upper bound on $\s$ using Kirby calculus. In Section~\ref{sec:cables}, we prove Proposition~\ref{prop:introcables} and  Theorem~\ref{thm:main}.

\subsection*{Acknowledgements}
This project began when the second author
was visiting Georgia Tech. The second author would like to thank Georgia Tech for its hospitality during his visit. The authors would also like to thank Christopher Davis and John Etnyre for helpful conversations. Lastly, we thank Jeffrey Meier and Alexander Zupan for directing us to their work.

%%%%%%%%%%%%%%%%%%%%%%%%%%%%%%%%%%%%%%%%%
\section{The strong homotopy fusion number of cables}\label{sec:shfusion}

In this section, we prove the following proposition.
\begin{proposition}\label{prop:handle}If $K$ is ribbon, then $\mathcal{F}_{sh}(K_{p,1})\le\mathcal{F}(K)$ and $\mathcal{F}(K_{p,1})\le p\mathcal{F}(K)$.
\end{proposition}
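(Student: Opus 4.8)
The plan is to construct a single ribbon disk $D'$ for $K_{p,1}$ out of an optimal ribbon disk $D$ for $K$, and then to read off both inequalities from $D'$: its band count will give the bound on $\mathcal{F}(K_{p,1})$, while a simplified handle decomposition of its complement will give the much smaller bound on $\mathcal{F}_{sh}(K_{p,1})$. Throughout I set $n=\mathcal{F}(K)$ and fix a ribbon disk $D$ realizing it, which I view interchangeably as (i) an immersed disk in $S^3$ with exactly $n$ ribbon singularities pushed into $B^4$, and (ii) an embedded disk whose complement $B^4\smallsetminus\nu(D)$ carries a handle decomposition with one $0$-handle, $n+1$ $1$-handles, and $n$ $2$-handles. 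I will also use that a slice disk induces the $0$-framing on its boundary, so that the longitude coming from $D$ is exactly the one used in the $(p,1)$-cabling convention.

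For the bound $\mathcal{F}(K_{p,1})\le p\,\mathcal{F}(K)$, I would cable the immersed picture. Replacing $D$ by $p$ parallel ($0$-framed) copies and inserting the single meridional twist coming from the ``$+1$'' turns the immersed disk for $K$ into an immersed disk for $K_{p,1}$: near each ribbon singularity the $p$ parallel sheets thread through the $p$ parallel slits so that, for the $(p,1)$-pattern, each original singularity contributes exactly $p$ ribbon singularities and the result remains connected and ribbon. This yields an immersed disk for $K_{p,1}$ with $pn$ ribbon singularities, hence a ribbon disk $D'$ with $pn$ bands, giving $\mathcal{F}(K_{p,1})\le pn$. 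The point requiring care here is that the meridional twist connects the $p$ strands without forcing any additional bands and does not create extra (or non-ribbon) double points, so that the count is exactly $pn$ rather than $pn+(p-1)$.

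For the bound $\mathcal{F}_{sh}(K_{p,1})\le\mathcal{F}(K)$, I would keep the same disk $D'$ but simplify the handle decomposition of its complement. The key observation is local: the cable disk may be modeled inside the $4$-dimensional neighborhood $\nu(D)\cong D^2\times D^2$ as $\Delta=\{(z^p,z):z\in D^2\}$, an embedded disk with $\partial\Delta=K_{p,1}$, so that
\[
B^4\smallsetminus\nu(\Delta)=\bigl(B^4\smallsetminus\nu(D)\bigr)\cup_{D^2\times\partial D^2}\bigl(\nu(D)\smallsetminus\nu(\Delta)\bigr).
\]
Projecting to the second factor exhibits the local piece $\nu(D)\smallsetminus\nu(\Delta)$ as the total space of an annulus bundle over $D^2$; it is homotopy equivalent to $S^1$ and, relative to the gluing region, is built from $0$- and $1$-handles only. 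Attaching it to the $n$-$2$-handle decomposition of $B^4\smallsetminus\nu(D)$ thus leaves the number of $2$-handles unchanged, so $B^4\smallsetminus\nu(D')$ admits a handle decomposition with exactly $n$ $2$-handles and $\mathcal{F}_{sh}(K_{p,1})\le n$. Concretely, in a Kirby diagram this is realized by cancelling the $n(p-1)$ surplus $2$-handles of the naive (``$pn$ bands'') decomposition against the $1$-handles introduced by the cabling.

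I expect the main obstacle to be precisely this last cancellation. Verifying that the local piece contributes no $2$-handles at the level of honest handle decompositions --- equivalently, exhibiting the explicit sequence of handle slides and handle cancellations in the Kirby diagram of $B^4\smallsetminus\nu(D')$, while correctly tracking all framings and the meridional $1$-handles produced by the twist --- is the technical heart of the argument; the homotopy-theoretic computation above only certifies that such a simplification must exist. A secondary point to nail down is the compatibility of the two models of $D'$ (the cabled immersed disk and the algebraic $\Delta$), i.e.\ that they are isotopic rel boundary, together with the matching of framing conventions noted at the outset.
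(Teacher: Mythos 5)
Your proof of the second inequality contains a fixable miscount, but your proof of the first inequality has a fatal gap --- indeed its key local claim is false. You assert that the piece $X=\nu(D)\smallsetminus\nu(\Delta)$ admits, relative to the gluing wall $W$, a handle decomposition with only $0$- and $1$-handles, inferring this from $X\simeq S^1$. That inference is invalid in principle (homotopy type does not control handle numbers --- the gap between $\mathcal{F}_h$ and $\mathcal{F}_{sh}$ in this very paper exists precisely because of this), and here the conclusion is demonstrably wrong: if $(X,W)$ were built from $0$- and $1$-handles only, then $\partial\bigl(B^4\smallsetminus\nu(D_p)\bigr)$ would be of the form $S^3_0(K)\,\#\,m(S^1\times S^2)$; but the boundary of the cable-disk complement is $S^3_0(K_{p,1})$, and $H_1$ forces $m=0$ while $S^3_0(K_{p,1})\not\cong S^3_0(K)$ for nontrivial $K$ (compare Alexander polynomials $\Delta_K(t^p)$ versus $\Delta_K(t)$, or the JSJ piece coming from the cabling torus). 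So $(X,W)$ necessarily contains $2$-handles, and the simplification cannot be localized to $\nu(D)$. Relatedly, your annulus-bundle model degenerates: over $z\in\partial D^2$ the point $z^p$ lies on $\partial D^2$, so those fibers are disks, not annuli, and $W$ is a solid torus minus a neighborhood of a boundary $(p,1)$-curve rather than a product wall. The paper's argument is global, not local: it presents $B^4$ by a Kirby diagram in which $K$ appears as an \emph{unknotted} component whose obvious disk in the $0$-handle is $D$, trivially embedded; replacing that component by its $(p,1)$-cable keeps it unknotted (the $(p,1)$-cable of the unknot is the unknot) with trivially embedded obvious disk $D_p$, so one may put a dot on it. This yields a diagram of $B^4\smallsetminus D_p$ with the \emph{same} $\mathcal{F}(K)$ two-handles --- the cabling is absorbed by rerouting the attaching circles of the existing handles through the new dotted circle, not by attaching new handles near $D$.

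For the bound $\mathcal{F}(K_{p,1})\le p\,\mathcal{F}(K)$, your construction ($p$ parallel copies plus the twist region) is the paper's, but your count is wrong in two ways. First, a ribbon singularity is where a band passes through a slit sheet; after taking $p$ parallel copies, \emph{each} of the $p$ parallel bands must cross \emph{all} $p$ parallel sheets (they locally separate space into layers), so each original singularity contributes $p^2$, not $p$, ribbon singularities. Second, the fusion number counts saddles (bands), not ribbon singularities, so the argument should be run on the handle decomposition of the disk: $D_p$ has $p(\mathcal{F}(K)+1)$ births, $p\,\mathcal{F}(K)$ saddles parallel to those of $D$, and $p-1$ half-twisted connecting bands (these \emph{are} needed in the disks-and-bands presentation; the twist alone permutes the $p$ sheets but does not fuse them in a Morse-theoretic movie). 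The paper then observes that the $p$ births and $p-1$ cabling bands form the $(p,1)$-cable pattern of a trivial disk, hence simplify to a single birth, leaving $p\,\mathcal{F}(K)+1$ births and $p\,\mathcal{F}(K)$ bands. This recovers your intended count of $p\,\mathcal{F}(K)$, but through handle counting rather than the (incorrect) singularity counting.
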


Before proving the proposition, we briefly explain how to obtain a Kirby diagram of a ribbon disk complement from a description of a ribbon disk. For more details, see, for example, \cite[Section 6.2]{GompfStipsicz}. Suppose that $K$ is a ribbon knot with a ribbon disk $D$ such that $D$ in $B^4$ has $n+1$ 0-handles, and $n$ 1-handles. Then there exists a movie representing $D$, which starts with $n+1$ births, followed by $n$ saddles. We can draw births as small unknotted components and saddles as red arcs, which gives us the diagram on the top of Figure~\ref{Fig1}. Here each red arc is framed. Now, we replace each unknotted component by a dotted circle and turn each saddle into a 0-framed simple closed curve, as drawn on the bottom of Figure~\ref{Fig1}. The resulting diagram is a Kirby diagram representing the disk complement $B^{4}\smallsetminus D$.

\begin{figure}[h]
\resizebox{.6\textwidth}{!}{\includegraphics{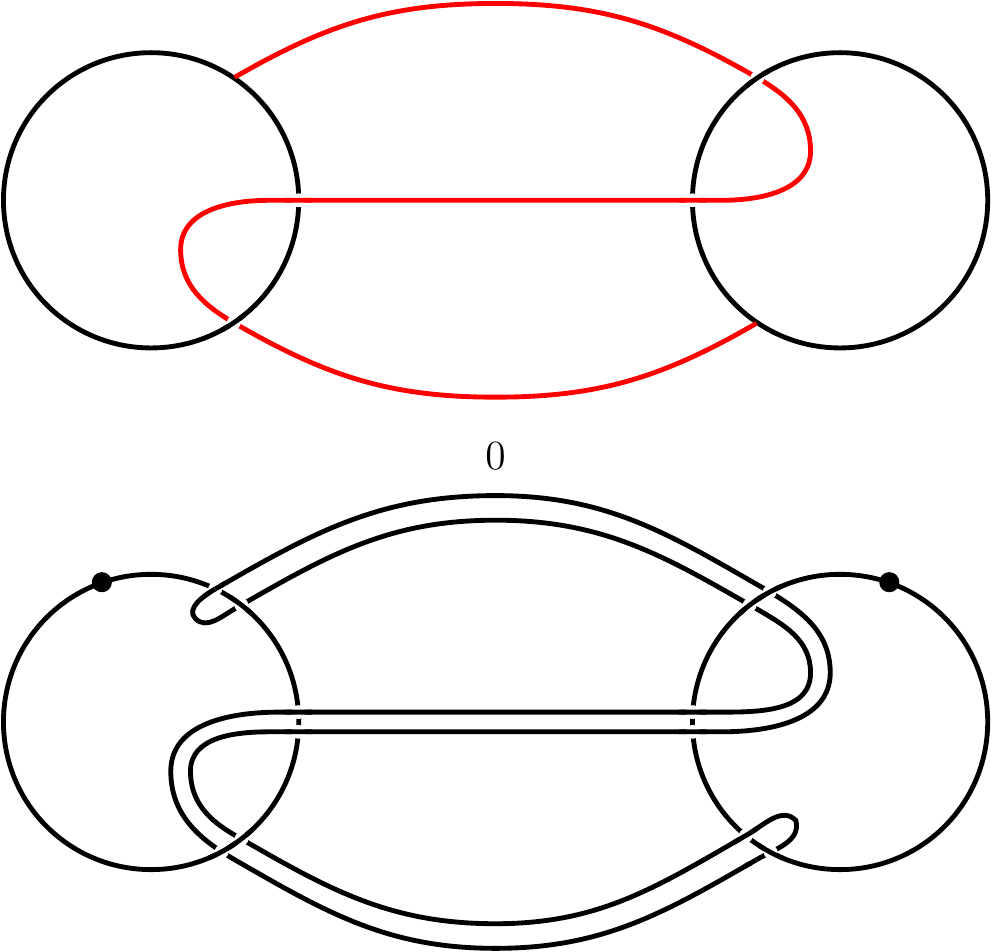}}
\caption{\label{Fig1} Top, a diagram representing births and saddles of a ribbon
disk $D$ for the stevedore knot (here, we are using the blackboard framing for the red arc). Bottom, the induced Kirby diagram representing $B^{4}\smallsetminus D$.}
\end{figure}

From the Kirby diagram of $B^{4}\smallsetminus D$ obtained from a movie of $D$, we can also see how the ribbon knot $K$ sits in $\partial B^{4}$. Instead of replacing all $n+1$ unknotted components with dotted circles, we do the replacement process except for one unknotted component, and label the remaining unknotted component as $K$. This resulting diagram is drawn on the top of Figure~\ref{Fig2}. It is straightforward to check that the diagram we get is indeed the Kirby diagram of $B^{4}$, and the labeled unknotted component is isotopic to the given ribbon knot $K$. Moreover, the obvious disk that $K$ bounds in the 0-handle of the handle decomposition of $B^4$, which is described by the Kirby diagram, is isotopic to the ribbon disk $D$. Note that the embedding of the disk $D$ in the 0-handle is trivial. By trivial, we mean that the disk can be isotoped to the boundary of the 0-handle. Now, we are ready to prove Proposition~\ref{prop:handle}.

\begin{figure}[h]
\resizebox{0.6\textwidth}{!}{\includegraphics{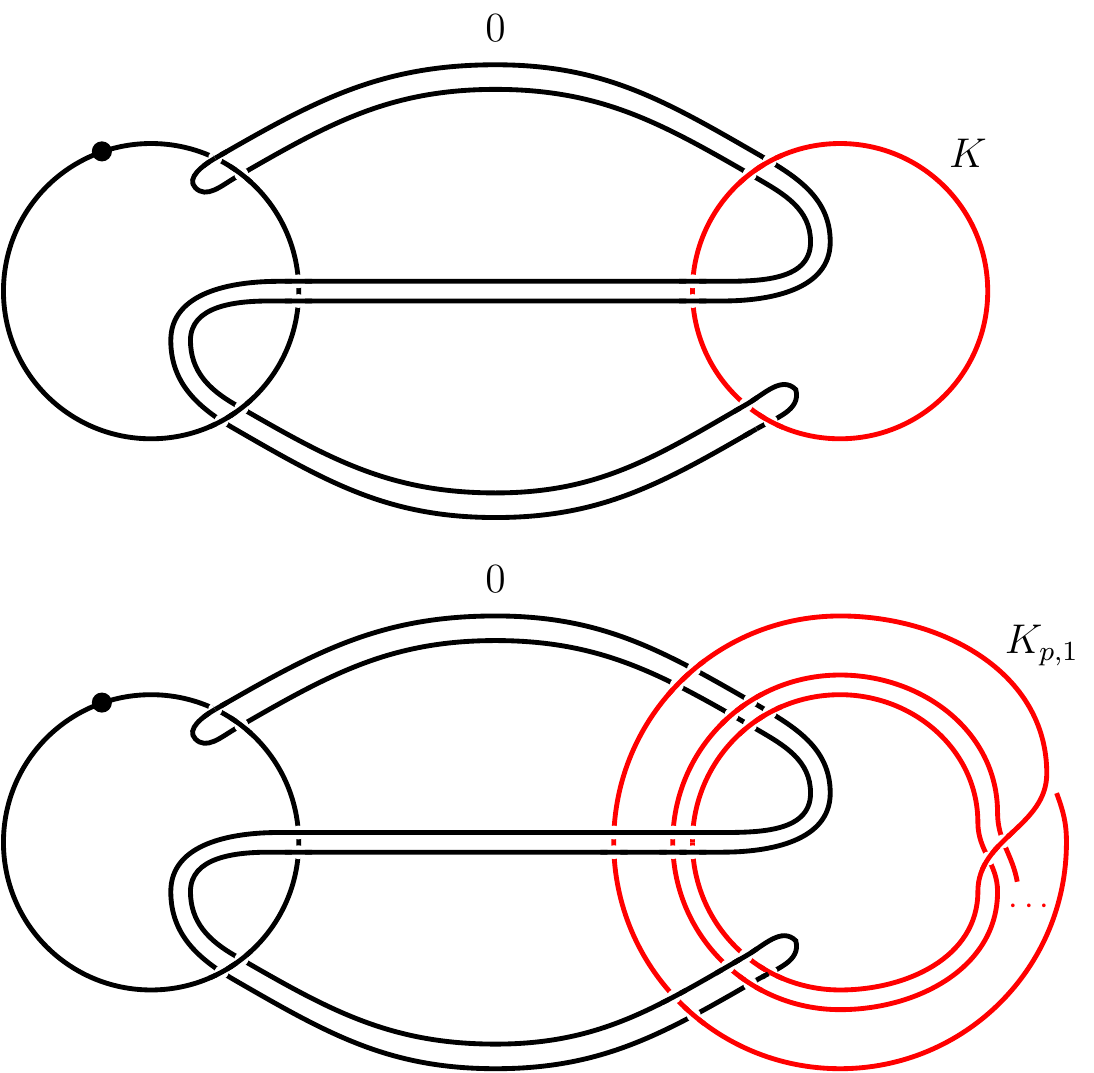}}
\caption{\label{Fig2}The knots $K$ and $K_{p,1}$ in the Kirby diagram
representing $B^{4}$.}
\end{figure}

\begin{proof}[Proof of Proposition \ref{prop:handle}] Let $K$ be a ribbon knot with a ribbon disk $D$. We may assume that $D$ in $B^4$ has $\f(K)+1$ 0-handles and $\f(K)$ 1-handles. Consider the Kirby diagram of $B^{4}$ together with the labeled unknotted component representing $K$ (the red curve on the top of Figure~\ref{Fig2}), as explained above. Replacing the unknotted component by its $(p,1)$-cable, as shown on the bottom of Figure~\ref{Fig2}, gives a diagram for $K_{p,1}$. Note that the curve replaced by its $(p,1)$-cable is still unknotted in the diagram. Let $D_p$ be the ribbon disk obtained by attaching $p-1$ half-twisted bands to $p$ parallel copies of $D$. Since $D$ is embedded trivially in the 0-handle, so is $D_p$. Hence we can replace $K_{p,1}$ by a dotted circle to get a new Kirby diagram for a handle decomposition of $B^4 \smallsetminus D_p$, whose number of 2-handles is again $\mathcal{F}(K)$. Therefore we conclude that $\mathcal{F}_{sh}(K_{p,1})\le\mathcal{F}(K)$.

\begin{figure}[h]
\resizebox{.6\textwidth}{!}{\includegraphics{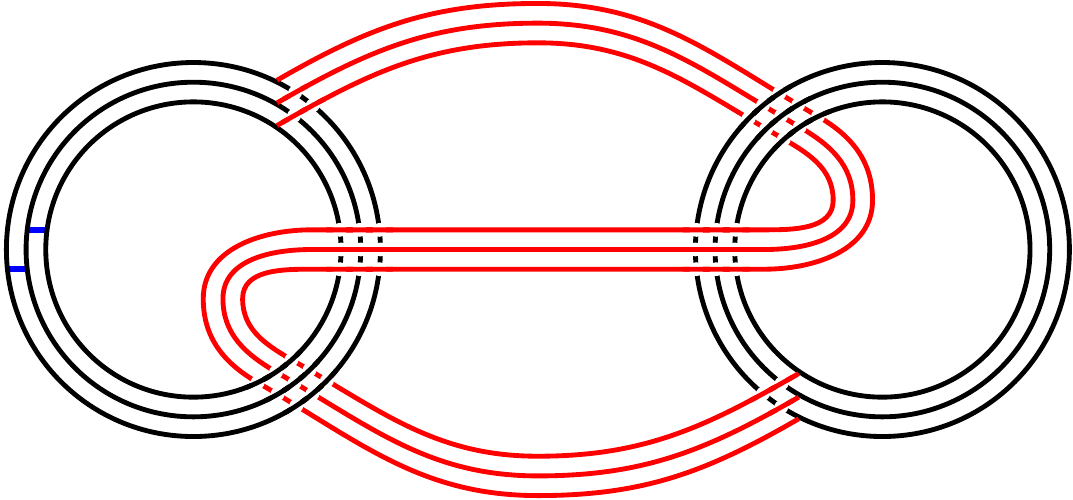}}
\caption{\label{fig:Dp1} A description of a ribbon disk $D_p$ obtained by adding $p-1$ 1-handles (blue arcs) to the $p$-parallel copies of $D$.}
\end{figure}

Now, we show the second inequality. First, note that the ribbon disk $D_p$ in $B^4$ has $p\f(K)+p$ 0-handles and $p\f(K)+p-1$ 1-handles. Here, $p\f(K)$ 1-handles are obtained from taking $p\f(K)$ parallel copies of 1-handles of $D$ in $B^4$ (see the red arcs of Figure~\ref{fig:Dp1}) and $p-1$ 1-handles corresponding to the cabling operation (see the blue arcs of Figure~\ref{fig:Dp1}). As the $(p,1)$-cable of the unknot is again the unknot, we see that $p$ 0-handles and $p-1$ 1-handles corresponding to the cabling operation simplify to a single 0-handle. This completes the proof.\end{proof}
%%%%%%%%%%%%%%%%%%%%%%%%%%%%%%%%%%%%%%%%%
\section{The fusion number of cables}\label{sec:cables}

First, we quickly recall some definitions and notations from knot Floer homology; see \cite{Manolescu:2016-1, Hom:2017-1} for survey articles on this subject. Throughout, we work over $\mathbb{F}=\mathbb{Z}/2\mathbb{Z}$ and use the convention that $\mathbb{N} = \mathbb{Z}_{\geq 0}$. 

We write $\CFKF(K)$ for the knot Floer complex over the ring $\F[U,V]$, following the notation of \cite[Section 2]{DHST}. (This invariant contains the same information as $\CFKi(K)$, as described in \cite[Section 1.5]{Zemke:2019-1}.) The invariant $\CFKF(K)$ is a finitely generated $\F[U,V]$-module. Recall that we say a knot $K$ has a \emph{unit box} if $\CFKF(K)$ contains a direct summand of the form $\{a, b, c, d\}$ with $$\d a= Ub + V c, \d b = Vd, \text{ and } \d c = U d.$$
The minus version of knot Floer homology, denoted by $\HFKm(K)$, is defined as the homology of the chain complex obtained from $\CFKF(K)$ by setting $V=0$. Note that $\HFKm(K)$ is a finitely generated $\F[U]$-module. Given an  $\F[U]$-module $M$, we define
$$\Ord_U(M):= \min \, \{k \in N \mid U^k \cdot \tors(M)=0\} \in \mathbb{N} \cup \{\infty\}.$$
The following knot invariant was defined in \cite{JMZ}.

\begin{definition}[{\cite[Definition 1.1]{JMZ}}]\label{def:order} If $K$ is a knot in $S^3$, we define its \emph{torsion order} as $$\Ord_U(K):= \Ord_U(\HFKm(K)).$$
\end{definition}

Moreover, recall that the torsion order of a ribbon knot gives a lower bound on the fusion number \cite[Corollary 1.8]{JMZ}. The goal of this section is to understand the behavior of torsion order under cabling. To be more precise, we will prove Proposition~\ref{prop:introcables}, which we restate here.

\begin{customprop}{~\ref{prop:introcables}}
If $K$ has a unit box, then 
$\Ord_U(K_{p_1, q_1; p_2, q_2; \dots ; p_n, q_n}) \geq p_1  p_2 \dots  p_n$.
\end{customprop}

\noindent We have the following immediate corollary.

\begin{corollary}\label{cor:fusion}
If $K$ is ribbon and has a unit box, then $\f(K_{p_1, 1; \dots ; p_n, 1}) \geq p_1 p_2 \dots p_n$.
\end{corollary}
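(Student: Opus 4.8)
The plan is to chain together two inequalities already available to us. The first ingredient is the fusion-number bound of Juh\'asz--Miller--Zemke, which asserts that for any ribbon knot $J$ one has $\f(J) \geq \Ord_U(J)$ \cite[Corollary 1.8]{JMZ}. The second ingredient is Proposition~\ref{prop:introcables}, specialized to the case $q_1 = \dots = q_n = 1$, which bounds the torsion order of the iterated cable from below by $p_1 p_2 \dots p_n$. Composing these should yield the claim immediately, so the bulk of the work is a bookkeeping check that the hypotheses of each ingredient are met.

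Before invoking \cite[Corollary 1.8]{JMZ}, I first need to verify that $K_{p_1, 1; \dots; p_n, 1}$ is itself ribbon, since otherwise $\f$ of the cable is not even defined. This is exactly what the construction in the proof of Proposition~\ref{prop:handle} provides: starting from a ribbon disk for $K$, it exhibits an explicit ribbon disk $D_p$ for $K_{p,1}$ (built from $p$ parallel copies of $D$ together with $p-1$ half-twisted bands). Iterating this construction $n$ times produces a ribbon disk for the full iterated cable, so the fusion number $\f(K_{p_1, 1; \dots; p_n, 1})$ is well defined and the Juh\'asz--Miller--Zemke bound applies to it. Note that the unit-box hypothesis is a property of $\CFKF(K)$ and is preserved as an input to Proposition~\ref{prop:introcables}; I will simply carry it along unchanged, since the proposition already accepts it directly.

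With ribbonness in hand, the argument is a one-line composition:
\[
\f(K_{p_1, 1; \dots; p_n, 1}) \;\geq\; \Ord_U(K_{p_1, 1; \dots; p_n, 1}) \;\geq\; p_1 p_2 \dots p_n,
\]
where the first inequality is \cite[Corollary 1.8]{JMZ} (using that the cable is ribbon) and the second is Proposition~\ref{prop:introcables} with $q_i = 1$. The only genuine content beyond citing these two results is the ribbonness verification; that step is the main obstacle, but it is a mild one, handled entirely by reusing the disk $D_p$ from Proposition~\ref{prop:handle} and iterating.
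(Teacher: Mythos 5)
Your proof is correct and follows essentially the same route as the paper: the paper's proof is exactly the one-line composition of the bound $\f(J) \geq \Ord_U(J)$ from \cite[Corollary 1.8]{JMZ} with Proposition~\ref{prop:introcables}. Your additional check that the iterated cable is ribbon (via the disk $D_p$ from the proof of Proposition~\ref{prop:handle}) is left implicit in the paper but is a correct and worthwhile point of care.
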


\begin{proof}
By \cite[Corollary 1.8]{JMZ}, we have $\f(J) \geq \Ord_U(J)$ for any ribbon knot $J$.
\end{proof}

The proof of Proposition~\ref{prop:introcables} relies on bordered Floer homology \cite{LOT}, interpreted as immersed curves as in \cite{HRW}. If $K$ is a knot in $S^3$, denote the complement of an open tubular neighborhood of $K$ in $S^3$ by $M_K$.  Given a 3-manifold $M$ with torus boundary, the immersed multicurve invariant, denoted by $\ic(M)$, was defined in \cite[Theorem 1]{HRW}. Further, it was shown that one can recover $\HFKm(K)$ from $\ic(M_K)$ \cite[Theorem 51]{HRW:2018}. Hence if $\ic(M_K)$ is given, then we can compute invariants that come from $\HFKm(K)$.

\begin{figure}[ht]
\includegraphics[scale=1.5]{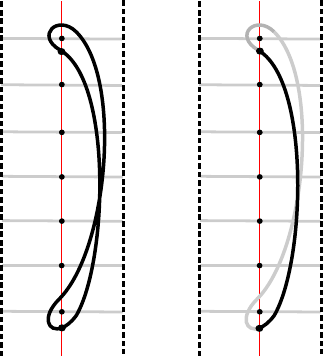}
\put(-35,4){\small$x$}
\put(-35,126){\small$y$}
\put(-121,4){\small$x$}
\put(-121,126){\small$y$}

\caption{Left, the immersed curve  intersecting $p=7$ vertical gray lines.  Right, the segment of the immersed curve that contributes an $\F[U]/U^p$ summand to $\HFKm$.}
\label{fig:figureprop}
\end{figure}

\begin{lemma}\label{lem:ordercable} 
Let $K$ be a knot in $S^3$. If $\ic(M_K)$ contains a curve as in Figure~\ref{fig:figureprop}, then $\Ord_U(K) \geq p$.
\end{lemma}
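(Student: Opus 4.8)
The plan is to extract the $U$-torsion information of $\HFKm(K)$ directly from the segment of the immersed curve shown in Figure~\ref{fig:figureprop}, using the recovery result \cite[Theorem 51]{HRW:2018} that identifies $\HFKm(K)$ with a suitable Floer-theoretic count coming from $\ic(M_K)$. Recall that $\HFKm(K)$ is computed by pairing the immersed multicurve $\ic(M_K)$ with a fixed reference curve (the horizontal line, or equivalently the vertical line $\mu$, in the punctured torus), where intersection points generate the complex and the $U$-action is recorded by how the curve winds around the marked point. The essential observation is that the figure displays a segment of the curve traversing $p$ consecutive vertical gray lines, and each transverse intersection with these lines contributes a generator; the differential along this segment is precisely multiplication by $U$, so that the segment realizes a chain complex isomorphic to the cyclic module $\F[U]/U^p$ as a summand (or direct summand) of $\HFKm(K)$.

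First I would make precise the dictionary between the picture and the algebra: fix the identification of the boundary torus $\partial M_K$ with $\R^2/\Z^2$ so that the Seifert longitude and meridian correspond to the horizontal and vertical directions, and recall that $\HFKm(K)$ is the homology of the intersection Floer complex $\ic(M_K) \cap \ell$ for the appropriate reference line $\ell$, with the $\F[U]$-module structure given by the vertical grading/height. The $p$ gray vertical lines in Figure~\ref{fig:figureprop} correspond to $p$ distinct heights at which the segment crosses; labeling the intersection points and tracking the bigons, one sees that the segment pictured on the right contributes generators $x_0, x_1, \dots, x_{p-1}$ (say) with $U$ acting by $x_i \mapsto x_{i+1}$ and $U x_{p-1} = 0$, exactly the structure of $\F[U]/U^p$.

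Next I would argue that this $\F[U]/U^p$ appears as an honest direct summand of the torsion submodule of $\HFKm(K)$, so that it cannot be cancelled by interactions with the rest of $\ic(M_K)$. This follows from the fact that the immersed-curve pairing decomposes as a direct sum over the connected components (and over local segments after pulling tight), so the contribution of the displayed segment splits off. Consequently $\tors(\HFKm(K))$ contains a summand on which $U^{p-1}$ acts nontrivially but $U^p$ acts as zero, whence $\Ord_U(\HFKm(K)) = \Ord_U(K) \geq p$ by definition of the torsion order.

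The main obstacle I anticipate is the second step: making rigorous that the displayed segment contributes a genuine \emph{direct summand} $\F[U]/U^p$ rather than a subquotient that could be killed after taking homology of the full complex. One must verify that the segment is in taut/reduced position and that no additional differentials connect these $p$ generators to generators arising from other portions of the curve—equivalently, that the relevant bigons stay within the pictured region. I would handle this by invoking the reduced (immersed, embedded-after-isotopy) form of $\ic(M_K)$ guaranteed by \cite{HRW}, under which the Floer differential is computed by embedded bigons only, and appealing to \cite[Theorem 51]{HRW:2018} to read off the $\F[U]$-module structure locally; the precise bookkeeping of which intersection points carry the $U$-powers is the place where care is required, but it is a finite, local verification once the conventions are fixed.
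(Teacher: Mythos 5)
Your overall route coincides with the paper's: invoke \cite[Theorem 51]{HRW:2018} to translate the pictured segment into algebra, argue that its contribution splits off, and conclude $\Ord_U(K) \geq p$ from an $\F[U]/U^p$ torsion summand of $\HFKm(K)$. However, your middle step misstates the dictionary, and as written it would fail. In the recipe of Theorem 51, the generators of $\CFKF(K)/(V=0)$ correspond to intersections of the immersed curve with the vertical line through the pegs, while each arc of the curve lying to the right of that line contributes a \emph{single} differential whose $U$-power equals the number of vertical gray lines it crosses. So the segment in Figure~\ref{fig:figureprop} produces exactly two generators $x$ and $y$ and one differential $\d x = U^p y$; it does not produce $p$ generators $x_0, \dots, x_{p-1}$ with $U$ acting by a shift and $U x_{p-1} = 0$. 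Your chain-level model is in fact impossible: the complex is a free $\F[U]$-module, so $U$ never annihilates a chain-level generator; the cyclic module $\F[U]/U^p$ appears only after taking homology, where the class of $y$ generates it. The $p$ crossings with the gray lines count the weight of one differential, not a number of generators.

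Once the dictionary is corrected, your argument closes exactly as the paper's does, and your anticipated obstacle (summand versus subquotient) is resolved by the same local observation you propose: each intersection point of the curve with the vertical line lies on exactly one arc to the right of that line, so in $\CFKF(K)/(V=0)$ no differential other than $\d x = U^p y$ involves $x$ or $y$. Hence the $\F[U]$-span of $x$ and $y$ is a direct summand of the complex, its homology $\F[U]/U^p$ (generated by $y$) is a direct summand of $\HFKm(K)$, and $\Ord_U(K) \geq p$ follows from the definition of torsion order. This is precisely the ``finite, local verification'' you defer in your last paragraph; the paper carries it out implicitly by citing Theorem 51 together with \cite[Figure 36]{HRW:2018}.
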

\begin{proof}
In order to translate from the immersed multicurve $\ic(M_K)$ to $\HFKm(K)$, we apply Theorem 51 of \cite{HRW:2018}. (See \cite[Figure 36]{HRW:2018} for an example illustrating Theorem 51.) A segment of an immersed curve lying entirely to the right of $p$ pegs (see the right of Figure~\ref{fig:figureprop}) tells us that $\CFKF(K)/(V=0)$ has a summand generated by $x$ and $y$ with
$$\d^- x= U^py.$$
Here, $p$ is just the number of times that the segment intersects the vertical gray lines. This implies that $y$ generates a $\F[U]/U^p$ summand in $\HFKm(K)$ and we conclude $\Ord_U(K) \geq p$.
\end{proof}

We combine \cite[Theorem 1]{HW-cables}, which provides an explicit algorithm for computing $\ic(M_{K_{p,q}})$ from $\ic(M_K)$, with Lemma~\ref{lem:ordercable} to prove Proposition~\ref{prop:introcables}. Before proceeding to the proof of the proposition, we give some examples of knots with unit boxes.

\begin{example}
A direct computation from a genus one doubly pointed Heegaard diagram shows that the figure eight knot has a unit box. See Figure \ref{fig:4_1}. More generally, it follows from \cite[Lemma 7]{Petkova} that any non-trivial thin knot $K$ with $\tau(K)=0$ has a unit box.
\end{example}

\begin{figure}[htb!]
\subfigure[]{
\begin{tikzpicture}[scale=0.7]

	\begin{scope}[thin, black!40!white]
		\draw [<->] (-2, 0) -- (2, 0);
		\draw [<->] (0, -2) -- (0, 2);
	\end{scope}
	\draw[step=1, black!50!white, very thin] (-1.9, -1.9) grid (1.9, 1.9);
	\node[] at (2.2, 0) (){\small{$A$}};	
	\node[] at (0, 2.3) (){\small{$m$}};	

	\node[] at (-1, -1) (a){$\F$};
	\node[] at (0, 0) (b){$\F^3$};
	\node[] at (1, 1) (c){$\F$};
	
	\draw [->] (b) -- (a);
	\draw [->] (c) -- (b);

\end{tikzpicture}
\label{subfig:}
}
\subfigure[]{
\begin{tikzpicture}[scale=0.7]

	\begin{scope}[thin, black!40!white]
		\draw [<->] (-2, 0) -- (2, 0);
		\draw [<->] (0, -2) -- (0, 2);
	\end{scope}
	\draw[step=1, black!50!white, very thin] (-1.9, -1.9) grid (1.9, 1.9);
	\node[] at (2.2, 0) (){\small{$i$}};	
	\node[] at (0, 2.3) (){\small{$j$}};	

	\filldraw (-1, -1) circle (2pt) node[] (){};
	\filldraw (0, 0) circle (2pt) node[] (){};
	\filldraw (1, 1) circle (2pt) node[] (){};
	
	\filldraw (-0.85, -0.85) circle (2pt) node[] (a){};
	\filldraw (-0.85, -0.15) circle (2pt) node[] (b){};
	\filldraw (-0.15, -0.85) circle (2pt) node[] (c){};
	\filldraw (-0.15, -0.15) circle (2pt) node[] (d){};
	\draw [->] (b) -- (a);
	\draw [->] (c) -- (a);
	\draw [->] (d) -- (b);
	\draw [->] (d) -- (c);
	
	\filldraw (0.15, 0.15) circle (2pt) node[] (a){};
	\filldraw (0.15, 0.85) circle (2pt) node[] (b){};
	\filldraw (0.85, 0.15) circle (2pt) node[] (c){};
	\filldraw (0.85, 0.85) circle (2pt) node[] (d){};
	\draw [->] (b) -- (a);
	\draw [->] (c) -- (a);
	\draw [->] (d) -- (b);
	\draw [->] (d) -- (c);

	\filldraw (1.4, 1.4) circle (0.7pt) node[] (){};	
	\filldraw (1.5, 1.5) circle (0.7pt) node[] (){};	
	\filldraw (1.6, 1.6) circle (0.7pt) node[] (){};	

	\filldraw (-1.4, -1.4) circle (0.7pt) node[] (){};	
	\filldraw (-1.5, -1.5) circle (0.7pt) node[] (){};	
	\filldraw (-1.6, -1.6) circle (0.7pt) node[] (){};	

\end{tikzpicture}
\label{subfig:}
}
\subfigure[]{
\includegraphics[scale=2]{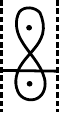}
}
\caption{Left, $\HFKh(4_1)$ in the Alexander-Maslov plane. Center, $\CFKi(4_1)$ in the $(i,j)$-plane. Right, the immersed curves associated to the complement of $4_1$.}
\label{fig:4_1}
\end{figure}

\begin{example}\label{ex:KT}
The Kinoshita-Terasaka knot $11n42$ and the Conway knot $11n34$ both have a unit box. By \cite{BaldwinGillam}, we have that $\HFKh(11n42)$ and $\HFKh(11n34)$ each have rank $33$ with higher differentials as in Figure~\ref{fig:KTConway}. Then a computation similar to the one in the proof of \cite[Lemma 7]{Petkova} yields that $\CFKF(11n42)$ and $\CFKF(11n34)$ each consist of a direct sum of a single generator and eight unit boxes. See Example 49 of \cite{HRW:2018} for the immersed curves associated to the Kinoshita-Terasaka and Conway knots.
\end{example}

\begin{figure}[htb!]
\subfigure[]{
\begin{tikzpicture}[scale=0.7]

	\begin{scope}[thin, black!40!white]
		\draw [<->] (-3, 0) -- (3, 0);
		\draw [<->] (0, -4) -- (0, 3);
	\end{scope}
	\draw[step=1, black!50!white, very thin] (-2.9, -3.9) grid (2.9, 2.9);
	\node[] at (3.2, 0) (){\small{$A$}};	
	\node[] at (0, 3.3) (){\small{$m$}};

	\node[] at (-2, -3) (a){$\F$};
	\node[] at (-2, -2) (b){$\F$};
	\node[] at (-1, -2) (c){$\F^4$};
	\node[] at (-1, -1) (d){$\F^4$};
	\node[] at (0, -1) (e){$\F^6$};
	\node[] at (0, 0) (f){$\F^7$};
	\node[] at (1, 0) (g){$\F^4$};
	\node[] at (1, 1) (h){$\F^4$};
	\node[] at (2, 1) (i){$\F$};
	\node[] at (2, 2) (j){$\F$};

	\draw [->] (c) -- (a);
	\draw [->] (e) -- (c);
	\draw[transform canvas={yshift=0.5ex, xshift=-0.5ex}, ->] (e) -- (c);
	\draw[transform canvas={yshift=-0.5ex, xshift=0.5ex}, ->] (e) -- (c);
	\draw [->] (g) -- (e);
	\draw[transform canvas={yshift=0.5ex, xshift=-0.5ex}, ->] (g) -- (e);
	\draw[transform canvas={yshift=-0.5ex, xshift=0.5ex}, ->] (g) -- (e);
	\draw [->] (i) -- (g);
	\draw [->] (d) -- (b);
	\draw [->] (f) -- (d);
	\draw[transform canvas={yshift=0.5ex, xshift=-0.5ex}, ->] (f) -- (d);
	\draw[transform canvas={yshift=-0.5ex, xshift=0.5ex}, ->] (f) -- (d);
	\draw [->] (h) -- (f);
	\draw[transform canvas={yshift=0.5ex, xshift=-0.5ex}, ->] (h) -- (f);
	\draw[transform canvas={yshift=-0.5ex, xshift=0.5ex}, ->] (h) -- (f);
	\draw [->] (j) -- (h);

\end{tikzpicture}
\label{subfig:KT}
}
\subfigure[]{
\begin{tikzpicture}[scale=0.7]

	\begin{scope}[thin, black!40!white]
		\draw [<->] (-4, 0) -- (4, 0);
		\draw [<->] (0, -5) -- (0, 4);
	\end{scope}
	\draw[step=1, black!50!white, very thin] (-3.9, -4.9) grid (3.9, 3.9);
	\node[] at (4.2, 0) (){\small{$A$}};	
	\node[] at (0, 4.3) (){\small{$m$}};

	\node[] at (-3, -4) (a){$\F$};
	\node[] at (-3, -3) (b){$\F$};
	\node[] at (-2, -3) (c){$\F^3$};
	\node[] at (-2, -2) (d){$\F^3$};
	\node[] at (-1, -2) (e){$\F^3$};
	\node[] at (-1, -1) (f){$\F^3$};
	\node[] at (0, -1) (g){$\F^2$};
	\node[] at (0, 0) (h){$\F^3$};
	\node[] at (1, 0) (i){$\F^3$};
	\node[] at (1, 1) (j){$\F^3$};
	\node[] at (2, 1) (k){$\F^3$};
	\node[] at (2, 2) (l){$\F^3$};
	\node[] at (3, 2) (m){$\F$};
	\node[] at (3, 3) (n){$\F$};

	\draw [->] (c) -- (a);
	\draw[transform canvas={yshift=0.25ex, xshift=-0.25ex}, ->] (e) -- (c);
	\draw[transform canvas={yshift=-0.25ex, xshift=0.25ex}, ->] (e) -- (c);
	\draw [->] (g) -- (e);
	\draw [->] (i) -- (g);
	\draw[transform canvas={yshift=0.25ex, xshift=-0.25ex}, ->] (k) -- (i);
	\draw[transform canvas={yshift=-0.25ex, xshift=0.25ex}, ->] (k) -- (i);
	\draw [->] (m) -- (k);
	\draw [->] (d) -- (b);
	\draw[transform canvas={yshift=0.25ex, xshift=-0.25ex}, ->] (f) -- (d);
	\draw[transform canvas={yshift=-0.25ex, xshift=0.25ex}, ->] (f) -- (d);
	\draw [->] (h) -- (f);
	\draw [->] (j) -- (h);
	\draw[transform canvas={yshift=0.25ex, xshift=-0.25ex}, ->] (l) -- (j);
	\draw[transform canvas={yshift=-0.25ex, xshift=0.25ex}, ->] (l) -- (j);
	\draw [->] (n) -- (l);

\end{tikzpicture}
\label{subfig:Conway}
}
\caption{Left, $\HFKh(11n42)$ in the Alexander-Maslov plane. Right,  $\HFKh(11n34)$. The arrows represent the higher differentials.}
\label{fig:KTConway}
\end{figure}
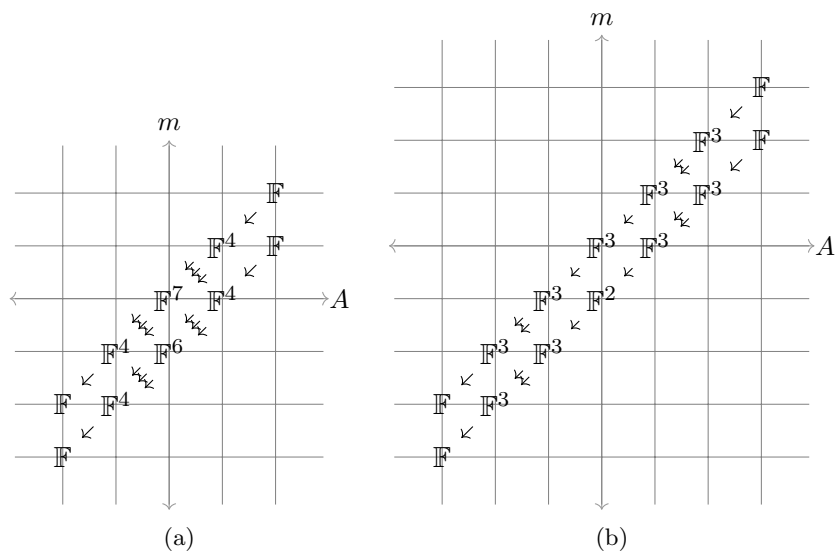

\begin{proof}[Proof of Proposition \ref{prop:introcables}]
We first prove $\Ord_U(K_{p,q}) \geq p$. By \cite[Proposition 47]{HRW:2018} (see in particular the third diagram in Figure~10 of \cite{HRW:2018}), we have that if $K$ has a unit box, then the immersed multicurve for $M_K$ contains a figure eight curve, as in the leftmost diagram in Figure~\ref{fig:figureeight}. 

We now apply the three step process following Theorem 1 of \cite{HW-cables}. (Since our curve does not have loose ends, we may skip step (2).) Namely, we first draw $p$ copies of the figure eight curve next to each other, each scaled vertically by a factor of $p$, staggered in height such that each copy of the curve is a height of $q$ units lower than the previous copy. We then translate the pegs horizontally so that they lie in the same vertical line, carrying the curves along with them. See Figure~\ref{fig:figureeight}.

\begin{figure}
\includegraphics[scale=1.5]{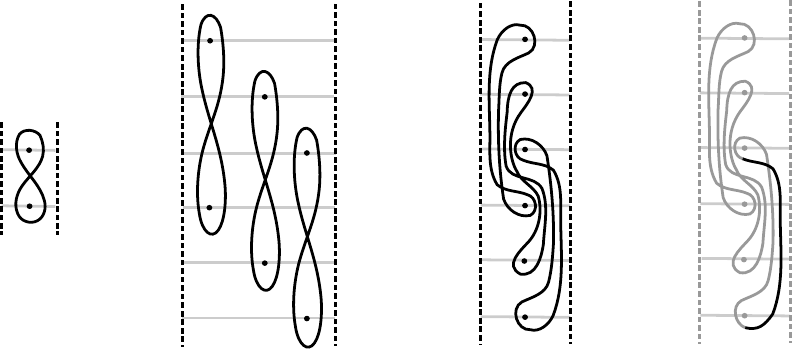}
\caption{The $(3,1)$-cable of a unit box. Far left, the immersed curve associated to a unit box. Second from left, $p=3$ copies of the immersed curve, each scaled by a factor of $p$, staggered in height such that each copy of the curve is $q=1$ units lower than the previous copy. Second from right, the result after the pegs are translated horizontally so that they lie in the same vertical line. Right, the segment of the immersed curve that contributes an $\F[U]/U^p$ summand to $\HFKm$.}
\label{fig:figureeight}
\end{figure}

Note that the rightmost copy of the $p$-scaled copy of the figure eight curve is the same as the immersed curve in Figure~\ref{fig:figureprop}. Hence by Lemma~\ref{lem:ordercable}, we conclude that $\Ord_U(K_{p,q}) \geq p$, as desired.

For iterated cables, we simply repeat this argument. That is, we consider the rightmost $p_1$-scaled copy of the figure eight curve, and take $p_2$ copies of it, each scaled vertically by a copy of $p_2$ and staggered in height such that each copy is $q_2$ units lower than the previous copy. An identical argument to the one above shows that the rightmost copy of the $p_2$-scaled curve will have a segment that lies entirely to the right of $p_1 p_2$ pegs. Iterating this argument shows that $\Ord_U(K_{p_1,q_1; \dots ; p_n, q_n}) \geq p_1 p_2 \dots p_n$, as desired.\end{proof}

Before we prove the main theorem, we show that the knot Floer complex over $\F[U,V]$ of any slice knot $K$ with $\Ord_U(K)=1$ splits as a direct sum of $\F[U,V]$ and unit boxes. Recall that since the torsion order gives lower bound for the fusion number, if a ribbon knot has fusion number one, then it has torsion order one (recall that knot Floer homology detects the unknot \cite{OSgenus}). Hence the following proposition in particular shows that the knot Floer complex over $\F[U,V]$ of any ribbon knot with fusion number one splits as a direct sum of $\F[U,V]$ and unit boxes.

\begin{proposition}\label{prop:ord1box}
If $K$ is a slice knot with $\Ord_U(K)=1$, then $K$ has a unit box. Moreover, $\CFKF(K)$ splits as a direct sum of $\F[U,V]$ and unit boxes.
\end{proposition}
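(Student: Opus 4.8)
The plan is to isolate a single free $\F[U,V]$ summand and then show that everything left over is forced to be a sum of unit boxes by the combination of sliceness and $\Ord_U(K)=1$. Throughout I would work with a reduced model of $\CFKF(K)$, so that the differential has image in $(U,V)\cdot\CFKF(K)$ and the homogeneous generators are in bijection with a basis of $\HFKh(K)$; recall that each homogeneous coefficient of $\d$ is then a single monomial $U^iV^j$, since $\F[U,V]$ is bigraded with $U$ and $V$ in distinct degrees.

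First I would split off the free part. Since $K$ is slice, $\CFKF(K)$ is locally equivalent to the trivial complex $\F[U,V]$ (see e.g.\ \cite{Zemke:2019-1, DHST}); this provides grading-preserving local chain maps $f\colon\F[U,V]\to\CFKF(K)$ and $g\colon\CFKF(K)\to\F[U,V]$. As $gf$ is a grading-preserving local endomorphism of $\F[U,V]$, it is the identity, so $f$ and $g$ exhibit a splitting $\CFKF(K)\cong\F[U,V]\langle z_0\rangle\oplus C'$, where $z_0=f(1)$ is a cycle in bidegree $(0,0)$ and $C'$ is reduced. Because the rank-one localized homology is carried entirely by $z_0$, the complex $C'$ is acyclic after inverting $U$ and $V$; in particular $C'$ has no free $\F[U,V]$ summand.

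Next I would read off the consequences of $\Ord_U(K)=1$ at the level of $C'$. The single free $\F[U]$ summand of $\HFKm(K)$ comes from $z_0$, so $H_*\!\big(C'/(V=0)\big)\cong\tors\,\HFKm(K)$, which by hypothesis is annihilated by $U$. Over the PID $\F[U]$ a finitely generated reduced complex whose homology is killed by $U$ is a direct sum of length-one arrows $\F[U]\xrightarrow{\,U\,}\F[U]$; since every homogeneous change of basis over $\F[U]$ uses monomial coefficients and hence lifts to $\F[U,V]$, I may assume the $V=0$ reduction of $\d$ is already in this form. Concretely, the generators of $C'$ pair up as $\{p_i,q_i\}$ with $\d p_i=Uq_i+(\text{terms divisible by }V)$ and $\d q_i=(\text{terms divisible by }V)$. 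By the conjugation symmetry of knot Floer homology, $\Ord_V(K)=1$ as well, so the $U=0$ reduction is symmetrically a sum of length-one $V$-arrows.

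The crux is to assemble this data into honest unit boxes, and this is where I expect the real work to be. I see two routes. The algebraic route uses the bigrading together with local acyclicity of $C'$ to pin down the $V$-divisible part of $\d$: one shows that the vertical and horizontal length-one arrows must be matched into four-element summands $\{a,b,c,d\}$ with $\d a=Ub+Vc$, $\d b=Vd$, $\d c=Ud$, ruling out longer differentials precisely because $\Ord_U=\Ord_V=1$ forbids any monomial of $U$- or $V$-degree $\geq 2$ in $\d$. The cleaner route, better suited to the paper's toolkit, is via immersed curves: the local triviality established above forces every component of $\ic(M_K)$ other than the distinguished horizontal line to be null-homotopic in $T^2$, and then $\Ord_U(K)=1$ together with Lemma~\ref{lem:ordercable} (no taut segment may wrap two pegs) forces each such component to be a figure eight, which contributes exactly one unit box by \cite[Proposition 47]{HRW:2018}; this is the picture already seen for the Kinoshita--Terasaka knot in Example~\ref{ex:KT}. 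Either way, since $\Ord_U(K)=1\neq 0$ the torsion of $\HFKm(K)$ is nonzero, so at least one box is present, proving both assertions. The main obstacle is this final reassembly step---showing that the length-one vertical and horizontal arrows cannot interleave into anything other than disjoint boxes---and I would expect to spend most of the effort there, most efficiently by appealing to the classification of the relevant immersed curves.
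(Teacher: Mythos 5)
Your setup is sound and matches the paper's opening moves: splitting off the free summand $\F[U,V]$ using sliceness (the paper cites \cite[Theorem 1]{Hom:2017-1} rather than re-deriving it from local equivalence, but your idempotent argument is fine), and observing that $\Ord_U(K)=1$ together with the symmetry forces the $V=0$ and $U=0$ reductions of the remaining summand to split, after separate changes of basis, into length-one arrows. But the proposition lives or dies at exactly the step you defer, and neither of your two proposed routes closes it. The concrete problem with your algebraic route is the claim that $\Ord_U(K)=\Ord_V(K)=1$ ``forbids any monomial of $U$- or $V$-degree $\geq 2$ in $\d$.'' This is false: a mixed monomial such as $U^2V^3$ dies in \emph{both} reductions $A/(V=0)$ and $A/(U=0)$, so it is invisible to both torsion orders, and nothing you have established prevents terms like $\d x_3 = Ux_1 + V\sum_j U^{\ell_j}V^{m_j}x_{i_j}$ with large $\ell_j, m_j$ in a reduced basis. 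There is a second, related gap: your two normalizations are achieved by \emph{different} basis changes (one lifted from $\F[U]$, one from $\F[V]$), and performing the second generally destroys the first; you cannot simply assume a single $\F[U,V]$-basis in which both reductions are simultaneously in arrow form. Eliminating the mixed terms and reconciling the two matchings is precisely the content of the paper's proof: it inducts downward from a generator $x_1$ in the top Alexander grading $g(K)$ (where genus detection forces $\d_U x_1 = 0$ and $\d_V x_1 \neq 0$), produces $x_2, x_3$ with $\d_V x_1 = Vx_2$ and $\d_U x_3 = Ux_1$, and then uses $\d^2 x_3 = 0$ together with reducedness and explicit changes of basis (replacing $x_4$ and $x_2$ by suitable combinations) to kill all mixed terms and split off one unit box, iterating on the remaining generators.

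Your immersed-curve route also does not go through as cited. First, the assertion that local triviality forces every non-distinguished component of $\ic(M_K)$ to be null-homotopic is not a consequence of sliceness available in the cited literature, and you would additionally need triviality of local systems on those components plus a classification showing that a taut null-homotopic component wrapping a single peg must be a figure eight. Second, the implications you invoke run the wrong way for this purpose: Lemma~\ref{lem:ordercable} says that a curve wrapping $p$ pegs \emph{implies} $\Ord_U(K)\geq p$, and \cite[Proposition 47]{HRW:2018} converts a unit box \emph{into} a figure-eight component; neither lets you conclude from $\Ord_U(K)=1$ that all components are figure eights. The paper avoids all of this by staying purely algebraic, and the uses of immersed curves elsewhere in the paper (Proposition~\ref{prop:introcables}) go in the safe direction, from a known unit box to a curve to a torsion-order bound. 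So while your reductions are correct and your instinct about where the work lies is accurate, the proof as proposed has a genuine gap at its crux.
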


\begin{proof}
Since $K$ is slice, by \cite[Theorem 1]{Hom:2017-1} (see also \cite[Theorem 1.5]{Zemke:invol}, forgetting the involutive part), we have that $\CFKF(K)$ splits as $\F[U,V] \oplus A$ for some free, finitely generated differential graded $\F[U,V]$-module $A$. 

At times, it will be convenient to consider $A/(U=0)$, which is naturally an $\F[V]$-module. We denote the induced differential on $A/(U=0)$ by $\d_V$. Similarly, denote the induced differential on $A/(V=0)$ by $\d_U$. The module $H_*(A/(U=0))$ is annihilated by $U^{\Ord_U(K)}$ and the module $H_*(A/(V=0))$ is annihilated by $V^{\Ord_U(K)}$. (Recall that by symmetry between $U$ and $V$, $\Ord_U$ can be defined in terms of $H_*(\CFKF(K)/(U=0))$ or equivalently $H_*(\CFKF(K)/(V=0))$.)

Suppose that $A$ is generated over $\F[U,V]$ by $\{x_i\}_{i=1}^n$. We may assume that $A$ is reduced, i.e., $\d x_i$ is in the image of $U$ or $V$ (or possibly both) for all $i$. Note that the notion of reducedness is preserved under basis changes of the form $x_i \mapsto x_i + \sum_{j\in J} U^{\ell_j} V^{m_j} x_{i_j}$. We will use the grading conventions of \cite{Zemke:2019-1}; see also \cite[Section 2]{DHST}. Recall that multiplication by $U$ lowers the Alexander grading by $1$, multiplication by $V$ raises the Alexander grading by $1$, and the differential preserves the Alexander grading.

Without loss of generality, let $x_1$ be a generator in Alexander grading $g(K)$. Since knot Floer homology detects genus \cite{OSgenus} and $A$ is reduced, there are no $x_i$ in Alexander grading greater than $g(K)$. Hence no $V$-power of $x_1$ (that is, no $V^nx_1$ for any natural number $n$) is in the image of $\d_V$. Since $H_*(A/(U=0))$ is $V$-torsion, it follows that $\d_V x_1 \neq 0$. Since $\Ord_U(K)=1$, we have that $\d_V x_1 = Vx_{i_0} + V \sum_{j \in J} V^{n_j}x_{i_j}$ for some index set $J$, where $n_j \geq 0$ for each $j$. We may reorder our basis elements such that $i_0 = 2$ and we may perform a change of basis to replace $x_2$ with $x_2 + \sum_{j \in J} V^{n_j} x_{i_j}$, so that 
\[ \d_V x_1 = Vx_2. \]

We now consider how $\d_U$ interacts with $x_1$. Since the Alexander grading of $x_1$ is $g(K)$, it follows that $\d_U x_1 = 0$. Because $H_*(A/(V=0))$ is $U$-torsion, we have that $U^nx_1 \in \Im \d_U$ for some natural number $n$. Moreover, since $\Ord_U(K) = 1$, we have that $n=1$. Hence $\d_U$ of some linear combination of the form $x_i + \sum_{j\in J} U^{\ell_j} x_{i_j}$ is equal to $U x_1$, and so (after a possible basis change) we may assume that there is a basis element  $x_3$ such that
\[ \d_U x_3 = U x_1. \] 

We now consider $\d^2 x_3$. We have that $\d_U x_3 = U x_1$ and $\d_V x_1 = V x_2$. Recall that $\d_U$ is the induced differential on $A/(V=0)$; hence 
\[ \d x_3 = U x_1 + V \sum_{j \in J_1} U^{\ell_j} V^{m_j} x_{i_j} \]
for some index set $J_1$, where  $\ell_j, m_j \geq 0$ for each $j$. Similarly, 
\[ \d x_1 = V x_2 + U \sum_{j \in J_2} U^{p_j} V^{q_j} x_{i_j} \]
for some index set $J_2$, where $p_j, q_j \geq 0$ for each $j$. (In fact, since $A(x_1) = g(K)$, we have that $\ell_j \leq m_j + 2$ and $p_j +1 \leq q_j$.) Thus, 
\[ \d^2 x_3 = UV x_2 + U^2 \sum_{j \in J_2} U^{p_j} V^{q_j} x_{i_j} + V \sum_{j \in J_1} U^{\ell_j} V^{m_j} \d x_{i_j}. \]
Since $\d^2 x_3 = 0$, we must have $q_j \ge 1$ for all $j \in J_2$, and also $\ell_{j'} \leq 1$ and $m_{j'} = 0$ for some $j' \in J_1$. Moreover, since our basis for $A$ is reduced, we in fact have that $\ell_{j'} = m_{j'} = 0$ for some $j' \in J_1$. After possibly reordering our basis elements, we may assume that $i_{j'} = 4$. We may then perform a change of basis and replace $x_4$ with $\sum_{j \in J_1} U^{\ell_j} V^{m_j} x_{i_j}$, so that $\d x_3 = U x_1 + V x_4$. We may also replace $x_2$ with $x_2 + U \sum_{j \in J_2} U^{p_j} V^{q_j-1} x_{i_j}$. Then setting 
\[ a = x_3, \quad b = x_1, \quad c = x_4 \quad d = x_2, \] 
we see that 
\[ \d a = U b + V c, \quad \d b = V d, \quad \d c = U d. \] 
It is straightforward to check that $\{a, b, c, d\}$ generate a direct summand of $A$. Hence $K$ has a unit box.

\begin{figure}[htb!]
\begin{tikzpicture}[scale=1]

	\begin{scope}[thin, black!40!white]
		\draw [<->] (-2, -1) -- (2, -1);
		\draw [<->] (0, -2) -- (0, 2);
	\end{scope}
	\draw[step=1, black!50!white, very thin] (-1.9, -1.9) grid (1.9, 1.9);
	\node[] at (2.2, -1) (){\small{$i$}};	
	\node[] at (0, 2.3) (){\small{$j$}};	
	
	\filldraw (0.1, 0.1) circle (2pt) node[label = left:{$x_2$}] (a){};
	\filldraw (0.1, 0.9) circle (2pt) node[label = left:{$x_1$}] (b){};
	\filldraw (0.9, 0.1) circle (2pt) node[label = right:{$x_4$}] (c){};
	\filldraw (0.9, 0.9) circle (2pt) node[label = right:{$x_3$}] (d){};
	\draw [->] (b) -- (a);
	\draw [->] (c) -- (a);
	\draw [->] (d) -- (b);
	\draw [->] (d) -- (c);

\end{tikzpicture}
\caption{The unit box from the proof of Proposition \ref{prop:ord1box}, drawn as a direct summand of $\CFKi(K)$ in the $(i, j)$-plane. Here, $g(K) = 2$.}
\label{fig:}
\end{figure}
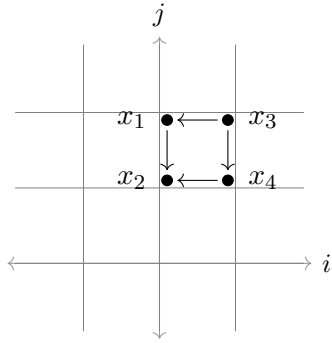

We now inductively apply the above change of basis to show that $A$ splits as a direct sum of unit boxes. We have that $A$ is generated over $\F[U,V]$ by $\{x_i\}_{i=1}^n$ and we have shown that $\{x_1, x_2, x_3, x_4\}$ generate a direct summand of $A$. Without loss of generality, let $x_5$ be a generator of maximal Alexander grading in $\{x_i\}_{i=5}^n$. We can now apply the above argument verbatim, with $x_5$ playing the role of $x_1$, to split off a unit box generated by $x_5, x_6, x_7, x_8$. Repeated applications of this argument shows that $A$ splits a direct sum of unit boxes, as desired.
\end{proof}

Finally, we prove Theorem~\ref{thm:main}.

\begin{proof}[Proof of Theorem 1.1]
Suppose $K$ is a ribbon knot with fusion number one. As mentioned above, since the torsion order gives lower bounds for the fusion number and the knot Floer homology detects the unknot, we have $\Ord_U(K)=1$. By Proposition~\ref{prop:ord1box}, we see that $K$ has a unit box. Moreover, Proposition~\ref{prop:handle} and Corollary~\ref{cor:fusion} imply that $\mathcal{F}(K_{p,1})=p$.

For the strong homotopy fusion number, Proposition~\ref{prop:handle} implies that $\mathcal{F}_{sh}(K_{p,1})\le 1$. Note that a knot has strong homotopy fusion number zero if and only if it is the unknot \cite{Gabai:1987-1}. Hence $\mathcal{F}_{sh}(K_{p,1})=1$, as desired.

The statement for iterated cables follows analogously.
\end{proof}

\bibliographystyle{alpha}
\def\MR#1{}
\bibliography{bib}
\end{document}